\newcommand{\citep}{\cite} 
\newcommand{\cf}[0]{\textit{cf}.\ } 
\renewcommand{\emph}[1]{\textit{#1}}
\DeclareMathOperator{\Log}{Log} 
\DeclareMathOperator{\erf}{erf} 
\renewcommand{\Re}{\operatorname{Re}}
\renewcommand{\Im}{\operatorname{Im}}
\theoremstyle{plain}
\newtheorem{thm}{Theorem}[section]
\newtheorem{theorem}[thm]{Theorem}
\newtheorem{lemma}[thm]{Lemma}
\theoremstyle{definition}
\newtheorem*{unremark}{Remark}
\begin{document} 
\title[Integral Transforms and OGF-to-EGF Conversion Formulas]{
       A Short Note on Integral Transformations and Conversion Formulas for 
       Sequence Generating Functions} 
\markright{Integral Transformations of Generating Functions}
\author{Maxie D. Schmidt} 

\address{School of Mathematics \\ 
        Georgia Institute of Technology \\ 
        Atlanta, GA 30318 \\ 
        USA} 
\email{maxieds@gmail.com, mschmidt34@gatech.edu} 

\allowdisplaybreaks 

\date{\today}
\subjclass[2010]{05A15; 30E20; 31B10; and 11B73. }
\keywords{Generating function; series transformation; gamma function; Hankel contour.}

\begin{abstract}
The purpose of this note is to provide an expository introduction to some more 
curious integral formulas and transformations involving generating functions. We seek to 
generalize these results and integral representations which effectively provide a 
mechanism for converting between a sequence's 
\emph{ordinary} and \emph{exponential generating function} 
(OGF and EGF, respectively) and vice versa. 
The Laplace transform provides an integral formula for the EGF-to-OGF transformation, 
where the reverse OGF-to-EGF operation requires more careful integration techniques. 
We prove two variants of the OGF-to-EGF transformation integrals from the 
Hankel loop contour for the reciprocal gamma function and from Fourier series expansions 
of integral representations for the Hadamard product of two generating functions, 
respectively. 
We also suggest several generalizations of these integral formulas and provide new 
examples along the way. 
\end{abstract}

\maketitle

\section{Introduction} 

\subsection{Definitions} 

Given a sequence $\{f_n\}_{n \geq 0}$, we adopt the notation for the respective 
\emph{ordinary generating function} (OGF), $F(z)$, and 
\emph{exponential generating function} (EGF), 
$\widehat{F}(z)$, of the sequence in some formal indeterminate parameter 
$z \in \mathbb{C}$: 
\begin{align} 
F(z) & = \sum_{n \geq 0} f_n z^n \\ 
\notag 
\widehat{F}(z) & = \sum_{n \geq 0} \frac{f_n}{n!} z^n. 
\end{align} 
Notice that we can always construct these functions over any sequence 
$\{f_n\}_{n \in \mathbb{N}}$ and formally perform operations on these functions 
within the ring of formal power series in $z$ 
without any considerations on the constraints imposed by the convergence of the 
underlying series a a complex function of $z$. If we assume that the 
respective series for $F(z)$ or $\widehat{F}(z)$ is analytic, or converges absolutely, 
for all $z \in \mathbb{C}$ with $0 < |z| < \sigma_f$, then we can apply 
complex function theory to these sequence generating functions and treat them as 
analytic functions of $z$ on this region.  

We can precisely define the form of an 
\emph{integral transformation} (in one variable) as \cite[\S 1.4]{INTTFSANDAPPS} 
\begin{equation} 
\label{eqn_GenIntTransformDef_v1} 
\mathcal{I}[f(x)](k) := \int_a^b \mathcal{K}(x, k) f(x) dx, 
\end{equation}
for $-\infty \leq b < a \leq +\infty$ and where the function 
$\mathcal{K}: \mathbb{R} \times \mathbb{C} \rightarrow \mathbb{C}$ is called 
the \emph{kernel} of the transformation. When the function $f$ which we operate on in the 
formula given by the last equation corresponds to an OGF or EGF of a sequence with which 
we are concerned in applications, we consider integrals of the form in 
\eqref{eqn_GenIntTransformDef_v1} to be so-called \emph{generating function transformations}. 
Such generating function transformations are employed to transform the ordinary power series 
of the target generating function for one sequence into the form of a generating function 
which enumerates another sequence we are interested in studying. 

Generating function transformations form a useful combinatorial and analytic method 
(depending on perspective) which can be combined and employed to study new sequences of many forms. 
Our focus in this article is to motivate the constructions of generating function transformations 
as meaningful and indispensable tools in enumerative combinatorics, combinatorial number theory, and 
in the theory of partitions, among other fields where such applications live. 
The particular modus operandi within this article shows the evolution of integral transforms for the 
reciprocal gamma function, and its multi-factorial integer sequence special cases, as a motivating 
method for enumerating several types of special sequences and series which we will consider in the 
next sections. 

The references \cite{GKP,ADVCOMB,ECV2} 
provide a much broader sense of the applications 
of generating function techniques in general to those readers who are not familiar with this 
topic as a means for sequence enumeration. A comprehensive array of analytic and experimental 
techniques in the theory of integral transformations is also treated in the references 
\cite{INTTFSANDAPPS,II}. We focus on only a comparatively few concrete examples of 
integral and sequence transformations in the next subsections with hopes 
to motivate our primary results proved in this article from this perspective. 
We hope that the discussion of these techniques in this short note provide motivation and 
useful applications to readers in a broader range of mathematical areas. 

\subsection{From hobby to short note: OGF-to-EGF conversion formulas} 
\label{Section_StdIntFormulas_Constructions} 

A time consuming hobby that the author assumes from time to time is rediscovering 
old and unusual identities in mathematics textbooks-- particularly in the areas of 
combinatorics and discrete mathematics. 
Favorite books to search include Comtet's \emph{Advanced Combinatorics} and the 
exercises and their solutions found in \emph{Concrete Mathematics} by Graham, Knuth and 
Patashnik. One curious and interesting conversion operation discussed in the exercises to 
Chapter 7 of the latter book involves a pair of integral formulas for converting an 
arbitrary sequence OGF into its EGF and vice versa provided the resulting integral is 
suitably convergent. The exercise listed in \emph{Concrete Mathematics} suggests the 
second form of the operation. Namely, that of converting a sequence EGF into its OGF. 

In this direction, we have an easy conversion integral for 
converting from the EGF of a sequence $\{f_n\}_{n \geq 0}$, denoted by 
$\widehat{F}(z)$, and its corresponding OGF, denoted by $F(z)$, given by the 
\emph{Laplace-Borel transform} \cite[\S B.14]{ANALYTIC-COMB}: 
\[
\mathcal{L}[\widehat{F}](z) = F(z) = \int_0^{\infty} \widehat{F}(tz) e^{-t} dt. 
\] 
Other integral formulas for conversions between specified generating function 
``\emph{types}'' can be constructed similarly as well 
(see Section \ref{subSection_Intro_IntTF_Examples}). 
The key facets in constructing these semi-standard, or at least known, 
conversion integrals is in applying a termwise series operation which generates a 
factor, or reciprocal factor, of the gamma function $\Gamma(z+1)$ when 
$z \in \mathbb{N}$. 
The corresponding ``\emph{reversion}'' operation of converting from a sequence's 
OGF to its EGF requires a more careful treatment of the properties of the 
reciprocal gamma function, $1 / \Gamma(z+1)$, and the construction of integral formulas 
which generate it for $z \in \mathbb{N}$ involving the \emph{Hankel loop contour} 
described in Section \ref{Section_HankelLoop}. 

That being said, Graham, Knuth and Patshnik already suggest a curious 
``\emph{known}'' integral formula 
for performing this corresponding OGF-to-EGF conversion operation of the following form 
\cite[p.\ 566]{GKP}: 
\begin{align} 
\label{eqn_CMATH_OGF2EGF_int_formula} 
\widehat{F}(z) = \frac{1}{2\pi} \int_{-\pi}^{\pi} F\left(z e^{-\imath t}\right) 
     e^{e^{\imath t}} dt. 
\end{align} 
The statement of this result is given without proof in the identity-full appendix 
section of the textbook. When first (re)-discovered many years back, the author 
assumed that the motivation for this integral transformation must correspond to the 
non-zero paths of a complex contour integral for the reciprocal gamma function. 
For many years the precise formulation of a proof of this termwise integral formula and 
its generalization to enumerating terms of reciprocal generalized multi-factorial functions, such as $1 / (2n-1)!!$, remained a mystery and curiosity of periodic interest to the author. 
In the summer of 2017, the author finally decided to formally inquire about the proof and 
possible generalizations in an online mathematics forum.  
The question went unanswered for over a year until by chance the author stumbled onto 
a Fourier series identity which finally motivated a rigorous proof of the formula in 
\eqref{eqn_CMATH_OGF2EGF_int_formula}. 
This note explains this proof and derives another integral formula for this operation of 
OGF-to-EGF inversion based on the Hankel loop contour. 
The preparation of this article is intended to be expository in nature in the hope of 
inspiring the creativity of more researchers towards developing related integral 
transformations of sequence generating functions. 

\subsection{Examples: Integral transformations of a sequence generating function} 
\label{subSection_Intro_IntTF_Examples} 

Integral transformations are a powerful and convenient formal and analytic tool 
which are used to study sequences and their properties. 
Moreover, they are easy to parse and apply in many contexts with only basic knowledge of 
infinitesimal calculus making them easy-to-understand operations which we can apply to 
sequence generating functions. 
The author is an enthusiast for particularly pretty or interesting integral representations 
(\cf \cite{II,INTSERIES-TABLES}) and has taken a special research interest in finding 
integral formulas of the ordinary generating function of sequence which transform the 
series into another generating function enumerating a modified special sequence. 

One notable example of such an integral transformation given in 
\cite[\S 2]{EXPLICIT-EVAL-ESUMS} allows us to construct 
generalized polylogarithm-like and Dirichlet-like series over any 
prescribed sequence in the following forms for integers $r \geq 1$: 
\begin{align}
\sum_{n \geq 0} \frac{f_n}{(n+1)^r} z^n & = 
     \frac{(-1)^{r-1}}{(r-1)!} \int_0^1 \log^{r-1}(t) F(tz) dt \\ 
\notag 
     & = 
     \frac{1}{r!} \int_0^{\infty} t^{r-1} e^{-t} F\left(e^{-t} z\right) dt. 
\end{align}
Another source of generating function transformation identities correspond to the 
bilateral series given by Lindel\"of in \cite[\S 2]{MILGRAM} of the form 
\begin{equation}
\sum_{n=-\infty}^{\infty} f(n) z^n = -\frac{1}{2\pi\imath} 
     \oint_{\gamma} \pi \cot(\pi w) f(w) z^{w} dw, 
\end{equation}
where $\gamma$ is any closed contour in $\mathbb{C}$ which contains all of the 
singular points of $f$ in its interior. 
In this note, we will focus on integral formulas for generating function 
transformations of an arbitrary sequence, $\{f_n\}_{n \geq 0}$. 

Additional series transformations involving a sequence generating function into the form of 
$\sum_{n \geq 0} f_n z^n / g(n)^s$ 
for $\Re(s) > 1$ and non-zero sequences $\{g(n)\}_{n \geq 0}$ are proved in 
\cite{GFTRANS2016,GFTRANS2018}. 
Note that the harmonic-number-related coefficients implicit to these series 
transformations satisfy summation formulas which are readily expressed by 
N\"orlund-Rice contour integral formulas as well. 
The author has proved in \cite{SQSERIES-MDS} so-called \emph{square series} transformations providing that 
\begin{equation}
\sum_{n \geq 0} f_n q^{n^2} z^n = \frac{1}{\sqrt{2\pi}} \int_0^{\infty} 
     \left[\sum_{b = \pm 1} F\left(e^{bt \sqrt{2\Log(q)}}\right)\right] e^{-t^2 / 2} dt,\ |q|, |qz| < 1. 
\end{equation}
Applications of these square series integral representations include many new 
integral formulas for theta functions and classical $q$-series identities such as the 
\emph{Jacobi triple product} and the partition function generating function, 
$(q; q)_{\infty}^{-1}$, expanded by Euler's \emph{pentagonal number theorem}. 

There are more general \emph{Meinardus methods} for computing asymptotics of the coefficients 
of classes of partition number generating functions of the form \cite{MEINARDUS-METHOD} 
\begin{equation}
\sum_{n \geq 0} p_n(b) z^n := \prod_{k \geq 1} \left(1-z^k\right)^{-b_k}, 
\end{equation}
where $p_n(b)$ denotes the number of weighted partitions of $n$ corresponding to the parameter 
weights $b_k$ for $k \geq 1$. 
Generating functions enumerating partition function sequences of this type are related to a known 
\emph{Euler transform} of a sequence $\{a_n\}_{n \geq 1}$ given by  \cite{OEIS-INTEGERTF}
\begin{equation}
1 + \sum_{n \geq 1} b_n z^n := \prod_{j \geq 1} \frac{1}{\left(1-z^j\right)^{a_j}} 
     \implies 
     \log\left(1 + B(z)\right) = \sum_{k \geq 1} \frac{A(z^k)}{k}, 
\end{equation} 
where $A(z) := \sum_n a_n z^n$ and $B(z) := \sum_n b_n z^n$ are the respective OGFs of the 
component sequences. 
In this case the right-hand-side generating function in the last equation is 
generated succinctly by a $q$-integral for the $q$-beta function of the form 
\cite{ANDREWS-QSERIES} 
\[
\frac{1}{1-q} \int_0^1 f(x) d(z, x) = \sum_{i \geq 0} f(z^i) z^i, 
\]
where inputting the modified generating function, 
$\widetilde{A}_z(t) := A(t) \log(z) / (t \log t)$ for fixed $z$, 
into this integral formula generates the second to last series result. 

\subsection{Results proved in this note}

In this short note we provide proofs of known integral formulas providing an 
ordinary-to-exponential generating function operation. We prove the following 
theorem using the Hankel loop contour for the reciprocal gamma function in 
Section \ref{Section_HankelLoop}. 

\begin{theorem}[OGF-to-EGF Integral Formula I]
\label{theorem_OGF2EGF_iformula_v1} 
For any real $c>0$, provided that $F(z)$ is analytic for $0 < |z| \leq c$, 
we have that 
\[
\widehat{F}(z) =  \sum_{n \geq 0} f_n z^n \int_{-\infty}^{\infty} 
     \frac{e^{c+\imath t}}{(c+\imath t)^{n+1}} dt = 
     \int_{-\infty}^{\infty} 
     \frac{e^{c+\imath t}}{(c+\imath t)} F\left(\frac{z}{c+\imath t}\right) dt. 
\]
\end{theorem}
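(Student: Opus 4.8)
The plan is to establish the scalar identity
\[
\frac{1}{n!} = \int_{-\infty}^{\infty} \frac{e^{c+\imath t}}{(c+\imath t)^{n+1}}\, dt
\]
for each integer $n \geq 0$ and each real $c > 0$, and then to sum against $f_n z^n$, justifying the interchange of summation and integration. Granting the scalar identity, the first equality in the theorem is immediate from the definition $\widehat{F}(z) = \sum_{n\geq 0} f_n z^n / n!$, and the second equality follows by pulling the sum inside the integral and recognizing $\sum_{n\geq 0} f_n z^n (c+\imath t)^{-n-1} = (c+\imath t)^{-1} F\big(z/(c+\imath t)\big)$, which is legitimate precisely when $|z/(c+\imath t)| \leq c$, i.e.\ $|z| \leq c\,|c+\imath t|$; since $|c+\imath t| \geq c$ this holds for all $t$ whenever $|z| \leq c^2$, and in any case the hypothesis that $F$ is analytic on $0 < |z| \leq c$ gives absolute convergence of the series defining $F$ at the argument $z/(c+\imath t)$ for $|z|$ small enough.

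For the scalar identity, the idea is to recognize the integral as a parametrization of the Hankel loop contour integral for the reciprocal gamma function, namely Hankel's formula
\[
\frac{1}{\Gamma(s)} = \frac{1}{2\pi\imath} \int_{\mathcal{H}} e^{w} w^{-s}\, dw,
\]
where $\mathcal{H}$ is a contour coming from $-\infty$ below the negative real axis, encircling the origin, and returning to $-\infty$ above it. The specific substitution I would use is the vertical line $w = c + \imath t$, $t \in (-\infty, \infty)$: this is a legitimate contour of integration for the reciprocal-gamma integral because one can deform the Hankel loop to any vertical line $\operatorname{Re}(w) = c > 0$ without crossing the branch cut along the negative real axis, and the contributions from the arcs at infinity vanish since $e^{w} w^{-s} \to 0$ rapidly as $\operatorname{Re}(w) \to +\infty$ on such a line (for $s$ with large enough real part, or for $s = n+1$ a positive integer where $w^{-s}$ is single-valued and no branch cut is needed at all). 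With $dw = \imath\, dt$ and $s = n+1$, Hankel's formula becomes
\[
\frac{1}{n!} = \frac{1}{\Gamma(n+1)} = \frac{1}{2\pi\imath} \int_{-\infty}^{\infty} \frac{e^{c+\imath t}}{(c+\imath t)^{n+1}}\, \imath\, dt = \frac{1}{2\pi} \int_{-\infty}^{\infty} \frac{e^{c+\imath t}}{(c+\imath t)^{n+1}}\, dt,
\]
so in fact the clean statement carries a factor of $1/(2\pi)$; I would either absorb this or note that the displayed theorem implicitly normalizes the measure, and I would present the computation with the $2\pi$ explicit so the reader can track the normalization. (For $n = 0$ one checks directly that $\int_{-\infty}^{\infty} e^{c+\imath t}/(c+\imath t)\, dt = 2\pi$ by the same contour argument, consistent with $1/0! = 1$.)

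The main obstacle is the rigorous justification that the vertical-line integral converges and equals the Hankel loop integral. The integrand $e^{c+\imath t}(c+\imath t)^{-n-1}$ has modulus $e^{c}\,(c^2+t^2)^{-(n+1)/2}$, which is integrable at $t = \pm\infty$ for every $n \geq 0$ (decaying like $|t|^{-(n+1)}$, hence at least like $|t|^{-1}$ — so actually for $n = 0$ the integral is only conditionally convergent and needs the oscillation of $e^{\imath t}$, which I would handle by an integration-by-parts / Abel-summation argument or by treating $n=0$ as a limiting case). The deformation from $\mathcal{H}$ to the vertical line requires bounding the integral over connecting horizontal segments $w = \sigma + \imath T$ for $\sigma$ ranging over $[-R, c]$ as $T \to \infty$; here $|e^{w} w^{-s}| = e^{\sigma}(\sigma^2+T^2)^{-(n+1)/2} \leq e^{c} T^{-(n+1)}$ uniformly, and the segment has bounded length, so the segment contribution is $O(T^{-(n+1)}) \to 0$, closing the argument. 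Finally, to interchange $\sum_n$ and $\int_t$ in passing from the first to the second displayed equality of the theorem, I would invoke dominated convergence or Tonelli via the bound $\sum_{n} |f_n|\,|z|^n\, e^{c}(c^2+t^2)^{-(n+1)/2} = e^{c}(c^2+t^2)^{-1/2}\sum_n |f_n|\big(|z|/\sqrt{c^2+t^2}\big)^{n}$, which is an integrable function of $t$ provided $\sum_n |f_n| r^n < \infty$ for $r$ slightly larger than $|z|/c$ — guaranteed by the analyticity hypothesis on $F$ in a neighborhood of the closed disk $|z| \leq c$, shrinking $|z|$ if necessary.
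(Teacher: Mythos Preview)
Your proposal is correct and follows essentially the same approach as the paper: establish the scalar identity $\frac{1}{\Gamma(n+1)} = \frac{1}{2\pi}\int_{-\infty}^{\infty} e^{c+\imath t}(c+\imath t)^{-n-1}\,dt$ via a Hankel-type contour for the reciprocal gamma function, then apply it termwise to the power series of $F$. The paper packages the scalar identity as a separate lemma and describes the Hankel loop as two horizontal rays along the positive real axis joined by a small circle about the origin (with integrand $(-t)^{-z}e^{-t}$), whereas you deform directly to the vertical line $\Re(w)=c$; these are equivalent up to the substitution $w=-t$, and your treatment of the arc estimates and the $n=0$ conditional-convergence case is in fact more careful than what the paper supplies. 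Your observation about the missing $1/(2\pi)$ is also correct: the paper's own lemma carries this factor, so its absence from the theorem statement is a normalization slip in the paper rather than an error on your part.
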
 

\noindent 
We also give a rigorous proof of the next integral formula relating 
$F(z)$ and $\widehat{F}(z)$. 

\begin{theorem}[OGF-to-EGF Integral Formula II]
\label{theorem_OGF2EGF_iformula_v2} 
If $F(z)$ is analytic for $0 < |z| < \sigma_f$, 
we have that \eqref{eqn_CMATH_OGF2EGF_int_formula} holds. 
Namely, we have that 
\[
\widehat{F}(z) = \frac{1}{2\pi} \int_{-\pi}^{\pi} F\left(z e^{\imath t}\right) 
     e^{e^{\imath t}} dt. 
\]
\end{theorem}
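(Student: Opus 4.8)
The plan is to establish \eqref{eqn_CMATH_OGF2EGF_int_formula} (equivalently, the displayed identity of Theorem~\ref{theorem_OGF2EGF_iformula_v2}) by recognizing its right-hand side as an instance of the classical contour representation of a \emph{Hadamard product}. Recall that if $A(z) = \sum_{n\geq0} a_n z^n$ and $B(z) = \sum_{n\geq0} b_n z^n$ have radii of convergence $\sigma_A$ and $\sigma_B$, then for any $r$ with $r < \sigma_A$ and $|z|/r < \sigma_B$ one has
\[
\sum_{n\geq0} a_n b_n z^n = \frac{1}{2\pi\imath} \oint_{|w|=r} A(w)\, B\!\left(\frac{z}{w}\right) \frac{dw}{w}.
\]
I would take $A = F$ and $B(z) = e^z = \sum_{n\geq0} z^n/n!$, so that $a_n b_n = f_n/n!$ and the left side is exactly $\widehat{F}(z)$; since $B$ is entire, the sole constraint is $r < \sigma_f$, and I may choose $r = |z|$ whenever $0 < |z| < \sigma_f$. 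Parametrizing the circle by $w = z e^{\imath s}$, $s \in [-\pi,\pi]$, gives $|w| = |z|$, $dw/w = \imath\, ds$, and $z/w = e^{-\imath s}$, so the representation becomes $\widehat{F}(z) = \tfrac{1}{2\pi}\int_{-\pi}^{\pi} F(z e^{\imath s})\, e^{e^{-\imath s}}\, ds$, which is \eqref{eqn_CMATH_OGF2EGF_int_formula} after the reflection $s \mapsto -t$.

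Rather than cite the Hadamard formula as a black box, the most transparent and self-contained route — and the one matching the Fourier-series motivation alluded to in the introduction — is to verify this particular instance directly. On the circle $|w| = |z| < \sigma_f$ both boundary functions admit absolutely convergent Fourier expansions in the angular variable,
\[
F\!\left(z e^{-\imath t}\right) = \sum_{n\geq0} f_n z^n e^{-\imath n t}, \qquad e^{e^{\imath t}} = \sum_{m\geq0} \frac{e^{\imath m t}}{m!}.
\]
Multiplying the two series and integrating term by term against the orthogonality relation $\tfrac{1}{2\pi}\int_{-\pi}^{\pi} e^{\imath k t}\, dt = \delta_{k,0}$, only the diagonal terms $m = n$ survive, and their sum is $\sum_{n\geq0} f_n z^n/n! = \widehat{F}(z)$. (This is precisely the Parseval/convolution identity for Fourier series applied to $t \mapsto F(z e^{-\imath t})$ and $t \mapsto e^{e^{\imath t}}$.)

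The single step that demands real care — and the one I expect to be the main obstacle — is justifying the term-by-term integration of the product series. Fixing $0 < |z| < \sigma_f$, the quantity $M := \sum_{n\geq0} |f_n|\,|z|^n$ is finite because $|z|$ lies strictly inside the disk of convergence of $F$, while $\sum_{m\geq0} 1/m! = e$ and, concretely, $\bigl|e^{e^{\imath t}}\bigr| = e^{\cos t} \leq e$ on the unit circle. Hence the double series $\sum_{n,m\geq0} (f_n z^n/m!)\, e^{\imath(m-n)t}$ is dominated, in absolute value and uniformly in $t \in [-\pi,\pi]$, by the finite constant $Me$, so Fubini--Tonelli on $\mathbb{N}^2 \times [-\pi,\pi]$ (equivalently, dominated convergence applied to the partial sums) legitimizes exchanging both sums with the integral. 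The residual bookkeeping — the orientation and relabeling reduction of the first paragraph, and checking that the output power series is again the EGF on the same punctured disk — is routine. I would close by remarking that substituting another entire series for $e^z$ in this scheme yields the multifactorial and generalized $1/\Gamma$ variants advertised earlier, tying the argument to Theorem~\ref{theorem_OGF2EGF_iformula_v1}.
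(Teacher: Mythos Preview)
Your proposal is correct and follows essentially the same approach as the paper: the paper gives two short proofs, one via Fourier coefficients and the orthogonality relation $\int_{-\pi}^{\pi} e^{\imath(n-k)x}\,dx = 2\pi\,\delta_{n,k}$, and an alternate one via the Hadamard product integral, and you have combined both viewpoints in exactly the same spirit. If anything, your treatment of the interchange of summation and integration (via the uniform bound $Me$ and Fubini--Tonelli) is more careful than the paper's, which simply assumes absolute convergence on the circle and proceeds.
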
 
\noindent 
The proof of Theorem \ref{theorem_OGF2EGF_iformula_v2} 
is given in Section \ref{Section_IFormulas_Fourier}. 

\section{Integral representations of the reciprocal gamma function} 
\label{Section_HankelLoop} 

Since $\Gamma(z)$ is a meromorphic function of $z$ with poles at the non-positive integers, it follows 
that the \emph{reciprocal gamma function}, $1 / \Gamma(z)$, is an entire function (of order one) with 
zeros at $z = 0, -1, -2, \ldots$ \cite[\S 5.1]{NISTHB}. 
Indeed, as $|z| \rightarrow \infty$ at a constant $|\operatorname{arg}(z)| < \pi$, we can expand 
\begin{equation}
\log\left[\frac{1}{\Gamma(z)}\right] \sim - z\log z + z + \frac{1}{2}\log\left(\frac{z}{2\pi}\right) - 
     \frac{1}{12z} + \frac{1}{360 z^3} - \frac{1}{1260 z^5}, 
\end{equation}
which can be computed via the infinite products 
\begin{align*} 
\frac{1}{\Gamma(z)} & = z \prod_{n \geq 1} \frac{\left(1 + \frac{z}{n}\right)}{\left(1 + \frac{1}{n}\right)^{z}} 
     = z e^{\gamma z} \prod_{n \geq 1} \left(1 + \frac{z}{n}\right) e^{-z/n}, 
\end{align*} 
where $\gamma \approx 0.577216$ is \emph{Euler's gamma constant}.
Classically, Karl Weierstrass called the function $1 / \Gamma(z)$ the ``\emph{factorielle}'' function, 
and used its representation to prove his famous \emph{Weierstrass factorization theorem} in 
complex analysis \cite[\S 2]{REMMERT-CMPLX-FUNC-THEORY}. 

For $z \in \mathbb{C}$ such that $\Re(z) > 0$ we have a known series expansion for the 
reciprocal gamma function given by 
\begin{equation}
\frac{1}{\Gamma(z)} = \sum_{k=1}^{\infty} a_k z^k = z + \gamma z^2 + \left( 
     \frac{\gamma^2}{2}-\frac{\pi^2}{12}\right) z^3 + 
     \left(\frac{\gamma^3}{6}-\frac{\gamma \pi^2}{12} + \frac{\zeta(3)}{3} 
     \right) z^4 + \cdots. 
\end{equation}
The coefficients $a_k$ in this expansion satisfy many known recurrence relations and 
expansions by the \emph{Riemann zeta function}.  
In \cite{RECIPGAMMA-COEFFS} an exact integral formula for these 
coefficients is given by 
\[
a_n = \frac{(-1)^n}{\pi \cdot n!} \int_0^{\infty} e^{-t} \Im\left\{ 
     \left(\log t-\imath\pi\right)^n\right\} dt. 
\] 
This integral formula is obtained in the reference using Euler's 
reflection formula for the gamma function given by 
\[
\frac{1}{\Gamma(z)} = \frac{\sin(\pi z)}{\pi} \Gamma(1-z), 
\] 
and then applying a standard known real integral to express the 
gamma function on the right-hand-side of the previous equation. 
Equivalently, the reflection formula can be stated as 
\[
\frac{1}{\Gamma(1+z) \Gamma(1-z)} = \frac{\sin(\pi z)}{\pi z}. 
\]

\subsection{The Hankel loop contour for the reciprocal gamma function} 

We seek an exact integral representation for the reciprocal gamma function, 
not just an integral formula defining the coefficients of its 
Taylor series expansion about zero in this case. 
To find such a formula 
we must use the \emph{Hankel loop contour} $H_{\delta,\varepsilon}$ 
shown in Figure \ref{figure_Hankel_loop} and consider the contributions of 
each component section of the contour in the limiting cases for 
increasingly small $\delta,\varepsilon \rightarrow 0$. 
We prove Theorem \ref{theorem_OGF2EGF_iformula_v1} 
using the next lemma derived from this contour below. 

\begin{figure}[ht!] 

\centering 
\begin{framed} 
\includegraphics{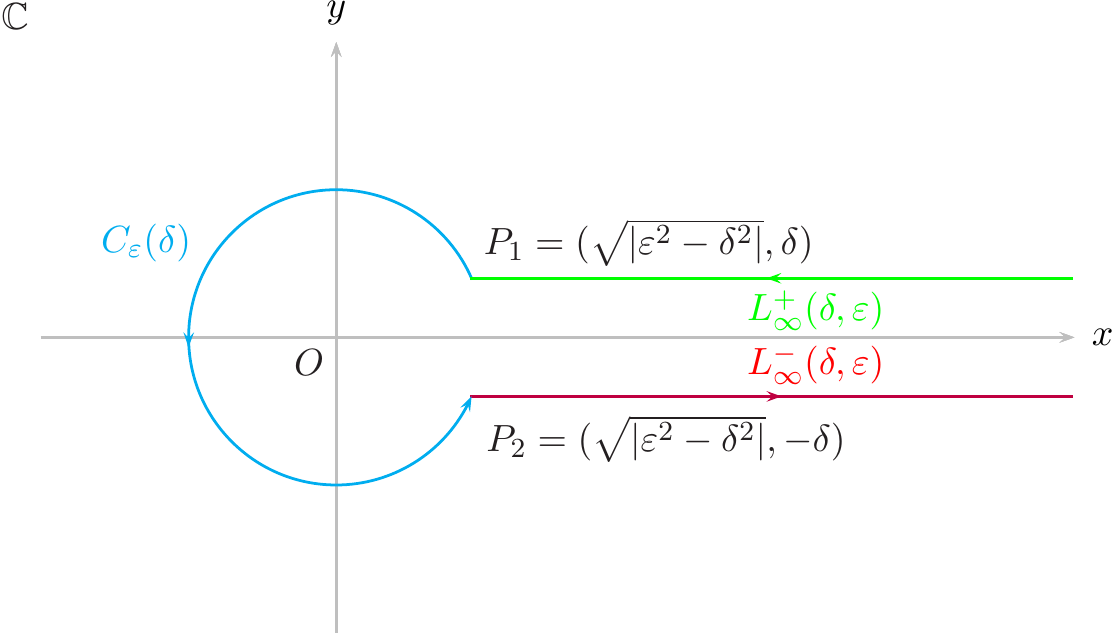}
\end{framed} 

\caption{The Hankel loop contour providing an integral representation of the 
         reciprocal gamma function when $\Re(z) > 0$. } 
\subcaption*{This contour starts positively 
         from the right, traverses the horizontal line $L_{\infty}^{+}(\delta, \varepsilon)$ 
         at distance $+\delta$ from the 
         $x$-axis from $+\infty \rightarrow \sqrt{|\varepsilon^2-\delta^2|}$, then 
         enters the semi-circular loop about the origin of radius $\varepsilon$ 
         denoted by $C_{\varepsilon}(\delta)$ at the point $P_1$, and then at the point 
         $P_2 = (\sqrt{|\varepsilon^2-\delta^2|}, -\delta)$ traverses the last horizontal 
         line $L_{\infty}^{-}(\delta, \varepsilon)$ back to infinity parallel to the 
         $x$-axis. } 
\label{figure_Hankel_loop} 

\end{figure} 

\begin{lemma} 
For any real $c > 0$ and $z \in \mathbb{C}$ such that $\Re(z) > 0$, 
\begin{equation} 
\label{eqn_reciprocal_gamma_func_intformula_v1} 
\frac{1}{\Gamma(z)} = \frac{1}{2\pi} \int_{-\infty}^{\infty} (c+\imath t)^{-z} 
     e^{c+\imath t} dt. 
\end{equation} 
\end{lemma}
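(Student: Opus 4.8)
The plan is to read the right-hand side as a vertical-line (Bromwich) contour integral and to deform it onto the Hankel loop of Figure \ref{figure_Hankel_loop}, where it is evaluated against Euler's reflection formula. First I would substitute $w = c + \imath t$, turning the claimed identity into
\[
\frac{1}{\Gamma(z)} = \frac{1}{2\pi\imath} \int_{c-\imath\infty}^{c+\imath\infty} e^{w}\, w^{-z}\, dw,
\]
where $w^{-z} := \exp(-z\Log w)$ is the principal branch, holomorphic on $\mathbb{C}\setminus(-\infty,0]$ and in particular on a strip about the line $\Re(w) = c > 0$. Along that line $|e^{w}| = e^{c}$ and $|w^{-z}| = e^{\Im(z)\operatorname{arg}(w)}\,|w|^{-\Re(z)}$ with $|\operatorname{arg}(w)| < \pi/2$, so the integrand is $O(|t|^{-\Re(z)})$; the integral then converges absolutely for $\Re(z) > 1$ and conditionally (via the oscillation of $e^{\imath t}$, made precise by a single integration by parts) for $0 < \Re(z) \le 1$.

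Next I would deform the line $\Re(w) = c$ onto the Hankel loop contour $H_{\delta,\varepsilon}$ (drawn, up to the reflection $w \mapsto -w$ that places the branch cut along the negative real axis, in Figure \ref{figure_Hankel_loop}). Applying Cauchy's theorem to the holomorphic function $e^{w} w^{-z}$ on $\mathbb{C}\setminus(-\infty,0]$ over the region bounded by the truncated segment of $\Re(w)=c$ from $c-\imath R$ to $c+\imath R$, two horizontal \emph{bridge} segments at heights $\Im(w)=\pm R$ joining that segment to $H_{\delta,\varepsilon}$, and the correspondingly truncated loop, I would show that the bridge contributions are $O(R^{-\Re(z)}) \to 0$ as $R \to \infty$; here one uses $|e^{w}| \le e^{c}$ on the part of a bridge with $\Re(w) \ge 0$ and the genuine exponential decay $|e^{w}| = e^{\Re(w)}$ on the part with $\Re(w) < 0$. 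This gives
\[
\frac{1}{2\pi\imath} \int_{c-\imath\infty}^{c+\imath\infty} e^{w}\, w^{-z}\, dw = \frac{1}{2\pi\imath} \int_{H_{\delta,\varepsilon}} e^{w}\, w^{-z}\, dw ,
\]
the loop integral on the right converging absolutely because of the exponential decay of $e^{w}$ along the rays $\operatorname{arg}(w) \to \pm\pi$.

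I would then evaluate the loop integral by splitting $H_{\delta,\varepsilon}$ into its three pieces and letting $\delta,\varepsilon \to 0$, exactly as the figure anticipates. On the circular arc $C_{\varepsilon}(\delta)$ the integrand is $O(\varepsilon^{-\Re(z)})$ while the arclength is $O(\varepsilon)$, so this piece drops out as $\varepsilon \to 0$ when $\Re(z) < 1$. On the two horizontal rays $L_{\infty}^{\pm}(\delta,\varepsilon)$, letting $\delta \to 0$ replaces $w^{-z}$ by the two boundary values $r^{-z} e^{\mp\imath\pi z}$ of the principal branch across the cut, and summing the rays with their opposite orientations yields
\[
\frac{1}{2\pi\imath}\bigl(e^{\imath\pi z} - e^{-\imath\pi z}\bigr) \int_{0}^{\infty} e^{-r}\, r^{-z}\, dr = \frac{\sin(\pi z)}{\pi}\, \Gamma(1-z) = \frac{1}{\Gamma(z)} ,
\]
the final equality being Euler's reflection formula recorded above. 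This proves the lemma for $0 < \Re(z) < 1$; since the Hankel-loop integral is entire in $z$ and $1/\Gamma$ is entire, the identity theorem extends it to all $z$ for which the vertical-line integral converges, in particular to all $\Re(z) > 0$.

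The main obstacle I expect is bookkeeping rather than ideas: fixing the branch of $w^{-z}$, the orientation of $H_{\delta,\varepsilon}$, and the signs of $\operatorname{arg}(w)$ on the two rays so that the reflection-formula combination comes out with exactly the right sign, together with the routine but necessary justifications of the two limiting steps -- the vanishing of the bridge segments as $R \to \infty$ and the interchange of $\delta \to 0$ with integration along the rays, which wants a dominating function uniform in small $\delta$. The one genuinely analytic point is that the evaluation above is clean only for $0 < \Re(z) < 1$ and the vertical-line integral is only conditionally convergent for $0 < \Re(z) \le 1$; both are handled by the analytic-continuation remark just made, after one integration by parts turns the conditionally convergent integral into an absolutely convergent one representing an analytic function of $z$ on $\Re(z) > 0$.
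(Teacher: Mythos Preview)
Your proposal is correct and shares the paper's central device --- the Hankel loop contour of Figure~\ref{figure_Hankel_loop} --- but it runs in the opposite direction and is considerably more explicit. The paper takes the Hankel-loop representation of $1/\Gamma(z)$ as its starting point (cited from \cite[\S 5.9]{NISTHB}), decomposes the loop into its circular arc and two rays, shows the arc contribution vanishes, computes the ray contributions as integrals along the real axis, and then simply asserts that one ``finally arrives at'' the vertical-line formula for arbitrary $c>0$; the passage from the rays to the line $\Re(w)=c$ is not argued. You instead begin at the vertical line, deform it onto the Hankel loop by bounding the bridge contributions at height $\pm R$, and then evaluate the loop integral from scratch via Euler's reflection formula, closing with an analytic-continuation step to cover all of $\Re(z)>0$. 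Your route is more self-contained --- it derives the Hankel representation rather than importing it, and it actually carries out the contour deformation the paper leaves implicit --- while the paper's route is shorter precisely because it outsources both of those steps to the reference. The bookkeeping you flag (branch conventions, orientation, dominated convergence on the rays, and the integration-by-parts needed for conditional convergence when $0<\Re(z)\le 1$) is real but routine, and none of it is addressed in the paper's version either.
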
 
\begin{proof}
\begin{subequations}
Working from the figure, we have that \cite[\S 5.9]{NISTHB} 
\begin{align} 
\frac{1}{\Gamma(z)} & = \lim_{d,\varepsilon\rightarrow 0} 
     \frac{1}{2\pi\imath} \oint_{H_{H_{\delta,\varepsilon}}} (-t)^{-z} e^{-t} dt \\ 
     & = \lim_{d,\varepsilon\rightarrow 0} \frac{1}{2\pi\imath} 
     \left[\int_{C_{\varepsilon}(\delta)} + 
     \int_{L_{\infty}^{+}(\delta, \varepsilon)} + 
     \int_{L_{\infty}^{-}(\delta, \varepsilon)}\right]\left( 
     e^{-\imath\pi z} t^{-z} e^{-t}\right) dt. 
\end{align} 
We will first approach the contribution of the section of the contour 
given by $C_{\varepsilon}$ which is a path enclosing the origin along the 
circle of radius $\varepsilon$ centered at $(0, 0)$. This portion of the 
contour is oriented in the positive direction and begins at the point 
$P_1 := (\sqrt{|\varepsilon^2-\delta^2|}, \delta)$ and ends at the point 
$P_2 := (\sqrt{|\varepsilon^2-\delta^2|}, -\delta)$. 
By parameterizing $t$ along this circle, we obtain the real integral giving 
\begin{align} 
I_{C} & := \lim_{d,\varepsilon\rightarrow 0} \int_{\sin^{-1}\left(-
     \frac{\delta}{\varepsilon}\right)}^{\sin^{-1}\left( 
     \frac{\delta}{\varepsilon}\right)} \imath \varepsilon^2 
     e^{-\imath\pi z} e^{-2i zt} e^{-e^{2\imath t}} dt = 0, 
\end{align} 
since 
$\sin^{-1}\left(\frac{\delta}{\varepsilon}\right) = \frac{\delta}{\varepsilon} 
 + \frac{\delta^3}{6 \varepsilon^3} + O\left(\frac{\delta^5}{\varepsilon^5} 
 \right) \rightarrow 0$ as $\delta,\varepsilon$ independently tend to zero. 
Now we can easily parameterize each of the sections of the contour on the 
horizontal lines each at distance $\delta$ from the $x$-axis. 
In particular, let's define our integrand in the complex parameters $z,w$ as 
$f_{\Gamma}(z, w) := e^{-\imath\pi z} w^{-z} e^{-w}$. Then we consider the 
limiting cases of the following parameterizations of the two line segments 
$\{(s, \pm \delta) : s \in [\sqrt{|\varepsilon^2-\delta^2|}, T]\}$ on 
$L_{\infty}^{+}(\delta, \varepsilon)$ and $L_{\infty}^{-}(\delta, \varepsilon)$, 
respectively, by evaluating the limit of $\delta,\varepsilon \rightarrow 0$ and then 
letting $T$ tend to $+\infty$: 
\begin{align} 
z_{\pm}(\delta, \varepsilon; t) & := \sqrt{|\varepsilon^2-\delta^2|} \pm 
     \imath\delta + t\left( 
     T - \sqrt{|\varepsilon^2-\delta^2|}\right) \\ 
     z_{\pm}^{\prime}(\delta, \varepsilon; t) & \phantom{:} = 
     T - \sqrt{|\varepsilon^2-\delta^2|},\ \mathrm{\ for\ } 
     t \in [0,1]. 
\end{align} 
When we take the first small-order limits we obtain 
\begin{equation}
\lim_{\delta,\varepsilon\rightarrow 0} \int_{0}^{1} f_{\Gamma}\left( 
     z_{\pm}(\delta, \varepsilon; t)\right) \cdot 
     z_{\pm}^{\prime}(\delta, \varepsilon; t) dt = 
     \frac{1}{2\pi\imath} \int_0^T e^{-\imath\pi z} s^{-z} e^{-s} ds, 
\end{equation} 
which by substitution provides us with the symmetric bounds of integration given by 
\begin{equation} 
\lim_{T \rightarrow \infty} 
     \frac{1}{2\pi\imath} \int_0^T e^{-\imath\pi z} s^{-z} e^{-s} ds = 
     \pm \int_{\mp\infty}^0 s^{-z} e^s ds. 
\end{equation} 
We then finally arrive at the stated known integral formula for the 
reciprocal gamma function which holds for any fixed real $c > 0$. 
\end{subequations}
\end{proof} 

\begin{proof}[Proof of Theorem \ref{theorem_OGF2EGF_iformula_v1}] 
Since we are initially motivated by finding a general conversion integral from a 
sequence OGF into its EGF, we notice that we require an application of 
\eqref{eqn_reciprocal_gamma_func_intformula_v1} 
termwise to the Taylor series expansions of our prescribed 
generating function by setting $z = n+1$. 
For example, if we assume that our sequence OGF at hand is well enough behaved when 
its argument satisfies $0 < \Re(z) < c$ for some fixed choice of the real $c > 0$ in the 
integral formula from above, we can sum the integrand of 
\eqref{eqn_reciprocal_gamma_func_intformula_v1} termwise to obtain 
\[
\widehat{F}(z) = \sum_{n \geq 0} f_n z^n \int_{-\infty}^{\infty} 
     \frac{e^{c+\imath t}}{(c+\imath t)^{n+1}} dt = 
     \int_{-\infty}^{\infty} 
     \frac{e^{c+\imath t}}{(c+\imath t)} F\left(\frac{z}{c+\imath t}\right) dt. 
     \qedhere
\]
\end{proof} 

\subsection{Examples: Applications of the integral formula on the real line}  
We can perform the same ``trick'' of the generating function trades to sum a 
``\emph{doubly exponential}'' sequence generating function when we replace the 
sequence OGF by its EGF in the previous equation: 
\begin{equation}
\sum_{n \geq 0} \frac{f_n z^n}{(n!)^2} = 
     \int_{-\infty}^{\infty} 
     \frac{e^{c+\imath t}}{(c+\imath t)} \widehat{F}\left(\frac{z}{c+\imath t}\right) dt. 
\end{equation} 
Perhaps at first glance this iterated integral formula is 
somewhat unsatisfying since we have really just repeated the procedure for 
constructing the first integral twice, but in fact there are notable 
special case applications which we can derive from this method of summation which 
provide new integral representations for otherwise hard-to-sum hypergeometric series. 

For example, if we take the geometric series sequence case where $f_n \equiv 1$ for all 
$n \geq 0$, then we can arrive at a new integral formula for the 
doubly exponential series expansion of the \emph{incomplete Bessel function}, 
$I_0(2\sqrt{z}) = \sum_{n \geq 0} z^n / (n!)^2$ \citep[\S 5.5]{GKP}. 
In particular, we easily obtain that 
\begin{equation}
I_0(2\sqrt{z}) = \int_{-\infty}^{\infty} \frac{e^{c+\imath t}}{c+\imath t} 
     \exp\left(\frac{z}{c+\imath t}\right) dt. 
\end{equation}
There is an integral representation for this function which is simpler to evaluate 
in the general case 
given in \eqref{eqn_CMATH_OGF2EGF_int_formula}. 
We elaborate more on this identity, its proof, and the 
corresponding series involving Stirling numbers which it implies in the next section.

\section{An integral formula from Fourier analysis} 
\label{Section_IFormulas_Fourier} 

One curious identity that the author has come across relating the OGF of a sequence to 
its EGF is found in the appendices of the \emph{Concrete Mathematics} reference 
\cite[p.\ 566]{GKP}. It states \eqref{eqn_CMATH_OGF2EGF_int_formula} 
without proof, again providing that 
\begin{equation*} 
\widehat{F}(z) = \frac{1}{2\pi} \int_{-\pi}^{\pi} F\left(z e^{-\imath t}\right) 
     e^{e^{\imath t}} dt. 
\end{equation*} 
Finding a precise method of verifying this unproven identity is the initial motivation 
for this note. Given the discussion and lead up to an integral for the 
reciprocal gamma function taken over the real line via the Hankel loop contour in the 
last section, the author initially assumed -- and asked with no replies in online 
math forums -- that this computationally correct integral representation must 
correspond to the non-zero components of some complex contour integral. 
It turns out that this formula follows from the basic theory and constructions of 
Fourier analysis. 

\begin{proof}[Proof of Theorem \ref{theorem_OGF2EGF_iformula_v2}] 
Given a sequence, $\{f_n\}_{n \geq 0}$, its (mostly convergent) 
Fourier series is given by $f(x) = \sum_{n \geq 0} f_n e^{\imath\pi n}$. The terms of 
this sequence are then generated by this Fourier series according to the standard 
integral formula \cite{FOURIER-SERIES,INTTFSANDAPPS} 
\[
f_m = \frac{1}{2\pi} \int_{-\pi}^{\pi} f(x) e^{-\imath mx} dx, 
\]
for natural numbers $m \geq 0$. 
If we can assume that the Fourier series, $f(x)$, or equivalently the OGF, 
$F(e^{\imath x})$, is absolutely convergent for all $x \in [-\pi, \pi]$ then we can 
sum over the integral formula in the previous equation to obtain the first 
key component to this proof: 
\[
\sum_{m \geq 0} \frac{f_m z^m}{m!} = \frac{1}{2\pi} \int_{-\pi}^{\pi} 
     F(e^{\imath x}) e^{z e^{-\imath x}} dx. 
\] 
The change of variables $e^{\imath x} = z \cdot e^{-\imath t}$ for fixed $z$
shows that this formula is equivalent to the integral 
formula in \eqref{eqn_CMATH_OGF2EGF_int_formula} directly by a change of variables. 
Also, by expanding the integrand in powers of $e^{\pm\imath x}$ where 
$$\int_{-\pi}^{\pi} e^{\imath(n-k)x} dx = 2\pi \cdot \delta_{n,k},$$ 
it is apparent that these two formulas in fact generate the same power series 
representation for $\widehat{F}(z)$. 
\end{proof}
\begin{proof}[Alternate Proof of Theorem \ref{theorem_OGF2EGF_iformula_v2}]
Another satisfyingly less analytical and more formally motivated explanation for this 
behavior can be given by considering known integral formulas for the 
\emph{Hadamard product} of two series given in terms of the 
orthogonal set $\{e^{\imath kx}\}_{k=-\infty}^{\infty}$ for 
$x$ on the symmetric interval $[-\pi, \pi]$ 
\cite[\S1.12(V); Ex. 1.30, p.\ 85]{ADVCOMB} \cite[\cf \S 6.3]{ECV2}. 
This perspective on the formulations of these two series allows us to swap the 
series variables $z e^{\pm\imath x} \mapsto e^{\pm\imath x}$ from the input of 
one function in the product to another and similarly in the reverse direction. 
Thus we can effectively pick and choose where we would like to position the 
generating function parameter $z$ in each component of the integrand -- 
whether it be situated more naturally as an argument to $F$ as in 
\eqref{eqn_CMATH_OGF2EGF_int_formula}, or whether we choose to keep it 
nested in the corresponding 
multiplier function as in the previous equation. 
We shall see other examples of these integral formula variants in the next remark  
and following examples. 
\end{proof} 

\begin{unremark}[Generalizations of Series Expansions From Fourier Series] 
This technique of using a convergent Fourier series and the corresponding 
integral operation for extracting its coefficients can be generalized to generate 
many other series variants. 
For example, there are many zeta function and polylogarithm-related series 
which are summed by modifying a polylogarithmic series of the form 
expanded in Section \ref{Section_StdIntFormulas_Constructions} by the reciprocal of the 
\emph{central binomial coefficients}, $\binom{2n}{n}$. 
In particular, in the exponential-series-based generating function cases we have that 
\begin{align} 
\sum_{n \geq 0} \frac{f_n z^n}{n! \cdot \binom{2n}{n}} & = 
     \frac{2}{\pi} \int_{-\pi}^{\pi} 
     F(e^{-\imath x}) 
     \frac{\left[\sqrt{4-z e^{\imath x}} + \sqrt{z e^{\imath x}} 
     \sin^{-1}\left(\frac{\sqrt{z e^{\imath x}}}{2}\right) 
     \right]}{(4-z e^{\imath x})^{3/2}} dx \\ 
\notag 
     & = 
     \frac{2}{\pi} \int_{-\pi}^{\pi} 
     F(z e^{-\imath x}) 
     \frac{\left[\sqrt{4-e^{\imath x}} + \sqrt{e^{\imath x}} 
     \sin^{-1}\left(\frac{\sqrt{e^{\imath x}}}{2}\right) 
     \right]}{(4-e^{\imath x})^{3/2}} dx, 
\end{align} 
and in the geometric-series-based OGF cases we recover the exponential error 
function by 
\begin{align} 
\sum_{n \geq 0} \frac{f_n z^n}{\binom{2n}{n}} & = 
     \frac{1}{4\pi} \int_{-\pi}^{\pi} F(e^{-\imath x}) 
     \left[2 + e^{\frac{z e^{\imath x}}{4}} 
     \sqrt{\pi z e^{\imath x}} \erf\left(\frac{\sqrt{z e^{\imath x}}}{2} 
     \right)\right] dx \\ 
\notag
     & = 
     \frac{1}{4\pi} \int_{-\pi}^{\pi} F(z e^{-\imath x}) 
     \left[2 + e^{\frac{e^{\imath x}}{4}} 
     \sqrt{\pi e^{\imath x}} \erf\left(\frac{\sqrt{e^{\imath x}}}{2} 
     \right)\right] dx. 
\end{align} 
There are many other possibilities for constructing integral transformations for 
modified generating function types. 
All one needs to do is be creative and consult a detailed reference of 
compendia such as \cite{II,INTSERIES-TABLES}. 
\end{unremark} 

\subsection{Examples: Generalizations and solutions to a long-standing forum post} 
The primary goal of the 
\href{https://math.stackexchange.com/questions/2274972/an-integral-formula-for-the-reciprocal-gamma-function}{first post} 
mentioned in the introduction was to eventually 
generalize the integral formula in \eqref{eqn_CMATH_OGF2EGF_int_formula} 
to enumerate the modified EGF sequences of the form 
\[
\widehat{F}_{a,b}(z) := \sum_{n \geq 0} \frac{f_n z^n}{\Gamma(an+b+1)}, 
\]
for integers $a \geq 1$ and $b \geq 0$, or over factors of the 
generalized integer multifactorials defined in \cite{MULTIFACTJIS} as 
\[
\ddot{F}_{a,d}(z) := \sum_{n \geq 0} \frac{f_n z^n}{(an+d)!_{(a)}}. 
\]
In the spirit of our realization that the 
integral representation in 
\eqref{eqn_CMATH_OGF2EGF_int_formula} is derived from a Fourier series coefficient formula, 
we may similarly complete our initial goal to sum the second forms of these 
series in the special cases where $(a, b) = (2,0), (2,1)$. 
In particular, we can sum these cases of the 
modified EGFs defined above in closed-form as explicit integral formulas in the forms 
\begin{align} 
\ddot{F}_{2,0}(z) & = \frac{1}{2\pi} \int_{-\pi}^{\pi} F\left(z e^{-\imath t}\right) 
     e^{\frac{1}{2} e^{\imath t}} dt \\ 
\notag 
\ddot{F}_{2,1}(z) & = \frac{1}{2\pi} \int_{-\pi}^{\pi} F\left(z e^{-\imath t}\right) 
     e^{\frac{1}{2} \left[e^{\imath t}-\imath t\right]} 
     \erf\left(\sqrt{\frac{e^{\imath t}}{2}}\right) dt. 
\end{align} 
The modified exponential series of the first type identified above are primarily 
summed in closed-form using expansions of the \emph{Mittag-Leffler functions}, 
$E_{a,b}(z) := \sum_{n \geq 0} z^n / \Gamma(an+b)$, and 
powers of primitive $a^{th}$ roots of unity \cite[\S 10.46]{NISTHB}. 
For example, let's take $(a, b) := (3, 0)$ and observe that 
\begin{equation}
E_{3,0}(t) = \sum_{m \geq 0} \frac{t^m}{\Gamma(3m+1)} = 
     \frac{e^{t^{1/3}}}{3} + \frac{2 e^{-t^{1/3} / 2}}{3} 
     \cos\left(\frac{\sqrt{3} t^{1/3}}{2}\right). 
\end{equation} 
Then we arrive at a corresponding explicit integral representation for the 
modified EGF of any sequence of the form 
\[
\widehat{F}_{3,0}(z) = \frac{1}{2\pi} \int_{-\pi}^{\pi} F(z e^{-\imath t}) 
     E_{3,0}\left(e^{\imath t}\right) dt. 
\] 

\section{Concluding remarks} 
\label{Section_Concl} 

We have proved two key new forms of integral representations for the reciprocal gamma function 
on the real line. By composition and the uniform convergence of power series for functions 
defined on some disc $|z| < \sigma_f$, these results effectively provide us with OGF-to-EGF 
conversion formulas between the generating functions for some $F(z)$. These 
integral formulas for OGF-to-EGF conversion can be applied termwise, or in analytic estimates of the 
asymptotic growth of the coefficients in the power series expansions of the functions defined by the 
corresponding integral transformation. 

We have provided several examples of motivating cases of our so-termed 
\emph{generating function transformations} by integral-based methods in 
Section \ref{subSection_Intro_IntTF_Examples}. 
The broader applications of these transformation methods to other fields and phrasings of 
problems is certainly possible given a suitable context waiting for a new method from which 
to be approached. We hope that readers come away from this article with a new 
understanding of how useful and sometimes indispensable integral transformation methods are in 
sequence analysis.

\end{document}